\newtheoremstyle{break}
  {9pt}
  {9pt}
  {\itshape}
  {}
  {\bfseries}
  {.}
  {\newline}
  {}
\theoremstyle{break}
\newtheorem{bthm}{Theorem}
\theoremstyle{plain}
\newtheorem{thm}{Theorem}[section]
\newtheorem{lemma}[thm]{Lemma}
\newtheorem{prop}[thm]{Proposition}
\newtheorem{defn}[thm]{Definition}
\newtheorem{rem}[thm]{Remark}
\def\RR{{\mathbb R}}
\def\Im{\operatorname{Im}}
\def\Supp{\operatorname{Supp}}
\def\Exc{\operatorname{Exc}}
\def\trdeg{\operatorname{tr.deg}}
\def\CC{{\mathbb C}}
\def\ZZ{{\mathbb Z}}
\def\NN{{\mathbb N}}
\def\R{{\mathbf R}}
\def\QQ{{\mathbb Q}}
\def\PP{{\mathbb P}}
\def\B{\mathbf{B}}
\def\N{{\mathbf N}}
\def\*{\otimes}
\def\+{\oplus}                   
\def\*{\otimes}                  
\def\Supp{\operatorname{Supp}}
\def\Bs{\operatorname{Bs}}
\def\vol{\operatorname{vol}}
\begin{document}

\title[Augmented base loci and restricted volumes on normal varieties, II]{Augmented base loci and restricted volumes on normal varieties, II: The case of real divisors}

\author[A.F. Lopez]{Angelo Felice Lopez*}

\thanks{* Research partially supported by the MIUR national project ``Geometria delle variet\`a algebriche" PRIN 2010-2011.}

\address{\hskip -.43cm Dipartimento di Matematica e Fisica, Universit\`a di Roma
Tre, Largo San Leonardo Murialdo 1, 00146, Roma, Italy. e-mail {\tt lopez@mat.uniroma3.it}}

\thanks{{\it Mathematics Subject Classification} : Primary 14C20. Secondary 14J40, 14G17.}

\begin{abstract}
Let $X$ be a normal projective variety defined over an algebraically closed field and let $Z$ be a subvariety. Let $D$ be an $\RR$-Cartier $\RR$-divisor on $X$. Given an expression $(\ast) \ D \sim_{\RR} t_1 H_1 + \ldots + t_s H_s$ with $t_i \in \RR$ and $H_i$ very ample, we define the $(\ast)$-restricted volume of $D$ to $Z$ and we show that it coincides with the usual restricted volume when $Z \not\subseteq \B_+(D)$. Then, using some recent results of Birkar \cite{bir}, we generalize to  $\RR$-divisors the two main results of \cite{bcl}: The first, proved for smooth complex projective varieties by Ein, Lazarsfeld, Musta{\c{t}}{\u{a}}, Nakamaye and Popa, is the characterization of $\B_+(D)$ as the union of subvarieties on which the $(\ast)$-restricted volume vanishes; the second is that $X - \B_+(D)$ is the largest open subset on which the Kodaira map defined by large and divisible $(\ast)$-multiples of $D$ is an isomorphism. 
\end{abstract}

\maketitle

\section{Introduction}
\label{intro}
Let $X$ be a projective variety and let $D$ be an $\RR$-Cartier $\RR$-divisor on $X$. After their introduction in \cite{n, elmnp1}, the stable base loci of $D$ have gained substantial importance in the study of the birational geometry of $X$, see for example \cite{t,hm,bdpp,bchm}, to mention only a few. Let us recall here their definitions.
\begin{defn}
The {\rm stable base locus} of $D$ is
\[ \B(D)= \bigcap_{E \geq 0 : E \sim _{\RR} D} {\rm Supp}(E). \]
The {\rm augmented base locus} of $D$ is 
\[ \B_+(D)= \bigcap\limits_{\genfrac{}{}{0pt}{}{E \geq 0 :  D - E}{\mbox{\rm is ample}}} {\rm Supp}(E) \]
where $E$ is an $\RR$-Cartier $\RR$-divisor. 
\end{defn}
Since $\B_+(D)$ measures the failure of $D$ to be ample, it is clearly a key tool in several instances. On the other hand it is often not so easy to identify. To this end an important result of Ein, Lazarsfeld, Musta{\c{t}}{\u{a}}, Nakamaye,  and Popa \cite[Thm.\ C]{elmnp2} comes to help, at least when $X$ is complex and smooth:
\[ \B_+(D)= \bigcup\limits_{\genfrac{}{}{0pt}{}{Z\subseteq X :}{\vol_{X|Z}(D)=0}} Z \]
where, given a subvariety $Z \subseteq X$ of dimension $d > 0$, when $D$ is Cartier (whence also when it is a $\QQ$-Cartier $\QQ$-divisor), one defines the restricted linear series $H^0(X|Z,mD)$ to be the image of the restriction map $H^0(X,mD) \to H^0(Z, mD_{|Z})$ and the restricted volume as 
\[ \vol_{X|Z}(D) = \limsup\limits_{m \to + \infty} \frac{h^0(X|Z, mD)}{m^d/d!}. \]
One of the deep parts of \cite{elmnp2} is then to prove the strong continuity result that, if $Z$ is an irreducible component of $\B_+(D)$, then $\lim\limits_{D' \to D} \vol_{X|Z}(D') = 0$, where $D'$ is a $\QQ$-Cartier $\QQ$-divisor whose class goes to the class of $D$ \cite[Thm.\ 5.7]{elmnp2}.

In \cite[Thm.\ B]{bcl} we generalized \cite[Thm.\ C]{elmnp2} to any normal projective variety defined over an arbitrary algebraically closed field, but for $\QQ$-Cartier $\QQ$-divisors $D$.
This was achieved, in part, by outlining the importance of the behavior on $Z$ of the maps $\Phi_m : X \dashrightarrow \PP H^0(X, mD)$ associated to $mD$.

Now, when $D$ is an $\RR$-Cartier $\RR$-divisor, several difficulties arise, as for example one does not have neither the linear series $|mD|$, nor the associated maps nor, in general, the  restricted volume. Of course one could use the integer part, but, at least for this type of problems, this does not appear to be the right choice (see also Section \ref{vol}).

On the other hand, in a recent article of Birkar, a different approach was taken, and this proved quite successful, since it allowed him to generalize Nakamaye's theorem (\cite[Thm.\ 0.3]{n}, \cite[\S 10.3]{l}) to nef $\RR$-Cartier $\RR$-divisors on arbitrary projective schemes over a field \cite[Thm.\ 1.3]{bir}.

\begin{defn}
\label{esp}
Let $X$ be a projective variety and let $D$ be an $\RR$-Cartier $\RR$-divisor on $X$. We can write
\[ (\ast) \ \ D \sim_{\RR} t_1 H_1 + \ldots + t_s H_s \]
where, for $1 \leq i \leq s$, $H_i$ is a very ample Cartier divisor on $X$ and $t_i \in \RR$. For $m \in \NN$ we set
\[ \langle mD \rangle =  \lfloor mt_1 \rfloor H_1 + \ldots +  \lfloor mt_s \rfloor H_s \]
and
\[ \Phi_{\langle mD \rangle} : X \dashrightarrow \PP H^0(X,\langle mD \rangle). \]
\end{defn}
It is clear that both $H^0(X,\langle mD \rangle)$ and $\Phi_{\langle mD \rangle}$ depend on the expression $(\ast)$. On the other hand, as proved by Birkar, several facts about $D$ are in fact independent on $(\ast)$, such as for example that $D$ is big if and only if the upper growth of $h^0(X, \langle mD \rangle)$ is like $m^{\dim X}$ \cite[Lemma 4.2]{bir} (see also Lemma \ref{maxopen}(i)) or that $\B_+(D)= \bigcap\limits_{m \in \NN} \B(\langle mD \rangle - A)$ can be defined as is done for Cartier divisors \cite[Lemma 3.1]{bir}.

Continuing on this vein, we propose an analogous definition for the restricted volume

\begin{defn}
\label{volristr}
Let $X$ be a projective variety and let $D$ be an $\RR$-Cartier $\RR$-divisor on $X$ with an espression $(\ast)$ as in Definition {\rm \ref{esp}}. Let $Z \subseteq X$ be a subvariety of dimension $d > 0$. We set
\[ \vol_{X|Z}(D, (\ast)) = \limsup\limits_{m \to + \infty} \frac{h^0(X|Z,  \langle mD \rangle)}{m^d/d!}. \]
\end{defn}

and for the stable base locus

\begin{defn}
\label{b'}
Let $X$ be a projective variety and let $D$ be an $\RR$-Cartier $\RR$-divisor on $X$ and fix an espression $(\ast)$ as in Definition {\rm \ref{esp}}.
We set
\[ \B(D, (\ast)) = \bigcap\limits_{m \in \NN^+} \Bs |\langle mD \rangle|. \]
\end{defn}

As we will see in Proposition \ref{volume}, while in general $\vol_{X|Z}(D, (\ast))$ does depend on $(\ast)$, in the important case of $Z \not \subseteq \B_+(D)$, it is independent on $(\ast)$ and coincides with the usual  $\vol_{X|Z}(D)$. 

Our point is that this definition of restricted volume allows to generalize, to any $\RR$-Cartier $\RR$-divisor, the main results of \cite{bcl}.

First, the description of the complement of  $\B_+(D)$ in terms of the maps $\Phi_{\langle mD \rangle}$

\begin{bthm} 
\label{max}
Let $D$ be a big $\RR$-Cartier $\RR$-divisor on a normal projective variety $X$ defined over an algebraically closed field and fix an espression $(\ast)$ as in Definition {\rm \ref{esp}}. Then the complement $X - \B_+(D)$ of the augmented base locus is the largest Zariski open subset $U \subseteq X - \B(D, (\ast))$ such that, for all large and divisible $m$, the restriction of the morphism $\Phi_{\langle mD \rangle}$ to $U$ is an isomorphism onto its image. 
\end{bthm}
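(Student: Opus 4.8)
The plan is to follow the strategy of \cite[Thm.\ B]{bcl}, replacing the linear systems $|mD|$ and the maps $\Phi_m$ of the $\QQ$-Cartier case by $|\langle mD\rangle|$ and $\Phi_{\langle mD\rangle}$, and feeding in Birkar's $\RR$-divisor results in place of the classical facts for Cartier divisors. Writing $U_0 = X - \B_+(D)$, there are two things to prove: first, that $U_0 \subseteq X - \B(D,(\ast))$ and that $\Phi_{\langle mD\rangle}$ restricts to an isomorphism onto its image on $U_0$ for all large and divisible $m$; second, the maximality, namely that any open $U \subseteq X - \B(D,(\ast))$ enjoying the same isomorphism property for all large divisible $m$ is contained in $U_0$. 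The recurring technical device throughout is the elementary comparison $\lfloor km t_i\rfloor \geq k\lfloor m t_i\rfloor$, which yields $\langle kmD\rangle = k\langle mD\rangle + G_k$ with $G_k \geq 0$ an effective sum of the very ample $H_i$, hence base point free.

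For the first direction I would first check $\B(D,(\ast)) \subseteq \B_+(D)$. Fix a very ample $A$ as in \cite[Lemma 3.1]{bir}, so that $\B_+(D) = \bigcap_m \B(\langle mD\rangle - A)$. If $x \notin \B_+(D)$, pick $m_0$ with $x \notin \B(\langle m_0 D\rangle - A)$ and, for $k \gg 0$, a section of $k(\langle m_0 D\rangle - A)$ nonvanishing at $x$; multiplying it by sections of the base point free divisors $kA$ and $G_k$ nonvanishing at $x$ gives a section of $\langle km_0 D\rangle = k(\langle m_0 D\rangle - A) + kA + G_k$ nonvanishing at $x$, so $x \notin \B(D,(\ast))$. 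For the isomorphism on $U_0$, fix $m$ large and divisible with $\B(\langle mD\rangle - A) = \B_+(D)$, the stabilization of these loci along divisible $m$ coming from \cite[Lemma 3.1]{bir} and Lemma \ref{maxopen}; then for $k \gg 0$ the system $|k(\langle mD\rangle - A)|$ is base point free off $\B_+(D)$, hence on $U_0$. Writing $\langle kmD\rangle = A' + F$ with $A' = kA$ very ample and $F = k(\langle mD\rangle - A) + G_k$ base point free on $U_0$, the multiplication map $H^0(A')\otimes H^0(F) \to H^0(\langle kmD\rangle)$ shows that $\langle kmD\rangle$ separates points and tangent vectors on $U_0$: one uses sections of $F$ nonvanishing at the given point(s) of $U_0$ to transport the separation already provided by the very ample $A'$. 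As $km$ runs through the large divisible integers, $\Phi_{\langle kmD\rangle}$ restricts to an isomorphism of $U_0$ onto its image, exactly as in the corresponding step of \cite[Thm.\ B]{bcl}.

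For maximality I would argue by contradiction using the companion characterization $\B_+(D) = \bigcup_{\vol_{X|Z}(D,(\ast)) = 0} Z$, the other main result of the paper, the union being over positive-dimensional subvarieties. Suppose $U \cap \B_+(D) \neq \emptyset$ and pick $x$ in it; by the characterization there is a subvariety $Z \ni x$ of dimension $d > 0$ with $\vol_{X|Z}(D,(\ast)) = 0$. Since $x \in U$ and $U \cap \B(D,(\ast)) = \emptyset$, we have $Z \not\subseteq \B(D,(\ast))$ and $Z \cap U$ is a nonempty open subset of $Z$ on which, for large divisible $m$, the morphism $\Phi_{\langle mD\rangle}$ restricts to an isomorphism onto its image; in particular $\overline{\Phi_{\langle mD\rangle}(Z)}$ is $d$-dimensional. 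But a $d$-dimensional image of $Z$ under the maps attached to $\langle mD\rangle$ forces, after passing to the multiples $\langle \ell m D\rangle$ and using the bookkeeping above together with Birkar's bigness criterion \cite[Lemma 4.2]{bir} applied on $Z$, the growth $h^0(X|Z, \langle \ell m D\rangle) \gtrsim \ell^d$, whence $\vol_{X|Z}(D,(\ast)) > 0$, contradicting the choice of $Z$. Therefore $U \cap \B_+(D) = \emptyset$, i.e.\ $U \subseteq U_0$, which together with the first direction proves that $U_0$ is the largest such open set.

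The step I expect to be the main obstacle is the last one: making precise that a full-dimensional image of $Z$ under $\Phi_{\langle mD\rangle}$ forces the $(\ast)$-restricted volume on $Z$ to be strictly positive. This is the equivalence between positivity of $\vol_{X|Z}(D,(\ast))$ and generic finiteness of the Kodaira maps onto their images, and it must be established while controlling the discrepancy between $\langle \ell m D\rangle$ and $\ell\langle mD\rangle$ via the floor estimates; it is here that Proposition \ref{volume}, the independence of $(\ast)$ for $Z \not\subseteq \B_+(D)$, and Birkar's asymptotic results do the real work. By contrast, the forward direction is comparatively routine once the stabilization $\B(\langle mD\rangle - A) = \B_+(D)$ for large divisible $m$ is in hand.
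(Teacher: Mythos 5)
Your forward direction ($X - \B_+(D) \subseteq X - \B(D,(\ast))$ and the isomorphism of $\Phi_{\langle mD\rangle}$ on $X-\B_+(D)$ via the decomposition $\langle kmD\rangle = kA + k(\langle mD\rangle - A) + G_k$ coming from Theorem \ref{bi13}) is sound and matches the paper. The problem is your maximality argument: it is circular as the logic of this paper stands. You invoke Theorem B (the characterization of $\B_+(D)$ as the union of the $Z$ with $\vol_{X|Z}(D,(\ast))=0$, specifically the hard half that every irreducible component $Z$ of $\B_+(D)$ has vanishing $(\ast)$-restricted volume). But the paper's proof of that statement rests on the identity $\mu_m^{-1}(\B_+(D)) = \Exc(f_m)\cup\Supp(F_m)$, which is extracted from \eqref{equ:um} and \eqref{altra} in the proof of Theorem A and in particular uses the equality $U_m = X - \B_+(D)$ --- i.e.\ exactly the maximality you are trying to prove. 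The same dependence occurs in \cite{bcl} and \cite{elmnp2}: the vanishing of restricted volumes on components of $\B_+$ is a consequence of, not an input to, the analysis of the Kodaira maps. To use Theorem B here you would have to supply an independent proof of it, which amounts to redoing the very argument you are avoiding.

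Even granting Theorem B, the step you flag as the ``main obstacle'' is a real gap, not a technicality: you need that a $d$-dimensional image $\Phi_{\langle mD\rangle}(Z)$ forces $h^0(X|Z,\langle \ell m D\rangle)\gtrsim \ell^d$. This converse implication is proved nowhere in the paper (the paper only proves the positivity of $\vol_{X|Z}(D,(\ast))$ for $Z\not\subseteq\B_+(D)$, using a section of $|\langle m_1D\rangle - H|$ not vanishing on $Z$ and vanishing of $H^1$ of a twisted ideal sheaf), and \cite[Lemma 4.2]{bir} does not apply to the restricted algebra $\R(X|Z,D,(\ast))$; one would need a lower bound on the Hilbert function of a graded subalgebra of given transcendence degree, with the floor-function bookkeeping controlled. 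The paper's actual route to maximality avoids restricted volumes entirely: it takes the normalized blow-up $\mu_m$ of the base ideal of $|\langle mD\rangle|$, factors $\mu_m^*\langle mD\rangle = f_m^*A_m + F_m$ through the normalization of the image, and uses \cite[Prop.\ 2.3]{bbp} together with the numerical invariance of $\B_+$ to show $\mu_m^{-1}(\B_+(D))\subseteq \Exc(f_m)\cup\Supp(F_m)$, which combined with \eqref{equ:um} gives $U_m\subseteq X-\B_+(D)$ directly. I would redo your second half along those lines.
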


Second, the description of $\B_+(D)$ in terms of restricted volume

\begin{bthm} 
\label{b+}
Let $D$ be an $\RR$-Cartier $\RR$-divisor on a normal projective variety $X$ defined over an algebraically closed field and fix an espression $(\ast)$ as in Definition {\rm \ref{esp}}. For every irreducible component $Z$ of $\B_+(D)$ we have $\vol_{X|Z}(D, (\ast))=0$, and hence
\[ \B_+(D)= \bigcup\limits_{\genfrac{}{}{0pt}{}{Z\subseteq X :}{\vol_{X|Z}(D, (\ast))=0}} Z \]
\end{bthm}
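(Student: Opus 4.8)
The plan is to establish two complementary facts, from which the displayed equality follows formally: (a) every positive--dimensional irreducible component $Z$ of $\B_+(D)$ satisfies $\vol_{X|Z}(D,(\ast))=0$; and (b) $\vol_{X|Z}(D,(\ast))>0$ for every subvariety $Z\not\sub\B_+(D)$. Indeed $\sub$ in the displayed formula is immediate from (a), while $\supseteq$ is the contrapositive of (b). I would first dispose of the non--big case: if $D$ is not big then $\B_+(D)=X$, whose only irreducible component is $X$ itself, and $\vol_{X|X}(D,(\ast))=0$ because $h^0(X,\langle mD \rangle)$ grows strictly slower than $m^{\dim X}$ by \cite[Lemma 4.2]{bir}; the displayed equality is then trivial. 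So assume $D$ big. The engine for everything is a monotone comparison with rational classes: writing $D_r:=\sum_i r_iH_i$ for $r=(r_1,\dots,r_s)\in\QQ^s$, I claim
\[ \vol_{X|Z}(D_r)\le \vol_{X|Z}(D,(\ast))\le \vol_{X|Z}(D_{r'}) \qquad\text{whenever } r_i<t_i<r_i' \ (1\le i\le s). \]

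To see the right--hand inequality, take $m$ large and divisible enough that each $mr_i'\in\ZZ$; then $mD_{r'}-\langle mD \rangle=\sum_i(mr_i'-\lfloor mt_i\rfloor)H_i$ has nonnegative integral coefficients, so it is effective and a general section $\tau$ of it does not vanish identically on the integral subvariety $Z$. Since $Z$ is integral, $s\mapsto\tau s$ is injective and carries sections of $\langle mD \rangle$ vanishing on $Z$ to sections of $mD_{r'}$ vanishing on $Z$, and sections nonvanishing on $Z$ to sections nonvanishing on $Z$; hence it descends to an injection $H^0(X|Z,\langle mD \rangle)\incl H^0(X|Z,mD_{r'})$, giving $h^0(X|Z,\langle mD \rangle)\le h^0(X|Z,mD_{r'})$. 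Letting $m\to\infty$ (the reduction from divisible $m$ to all $m$ being routine, as in \cite[\S4]{bir}) yields the claim, and the left--hand inequality is symmetric. Statement (b) now follows at once: since $\{E:Z\not\sub\B_+(E)\}$ is open (a basic property of the augmented base locus, \cite{elmnp1}), for rational $r_i<t_i$ close to $t_i$ one still has $Z\not\sub\B_+(D_r)$, whence $\vol_{X|Z}(D_r)>0$ by the $\QQ$--divisor case \cite[Thm.\ B]{bcl} (in the equivalent form $\vol_{X|W}(\cdot)>0\iff W\not\sub\B_+(\cdot)$), and the comparison gives $\vol_{X|Z}(D,(\ast))\ge\vol_{X|Z}(D_r)>0$.

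It remains to prove (a). Fix a positive--dimensional component $Z$ of $\B_+(D)$. Choosing rational $r_i'\downarrow t_i$, the right--hand comparison gives $\vol_{X|Z}(D,(\ast))\le\inf_{r'}\vol_{X|Z}(D_{r'})$, so it suffices to show this infimum is $0$. I would do this in two steps, both relying on the continuity of the restricted volume on the big cone. First, approaching $D$ from below through rational classes $D_r$ with $r_i\uparrow t_i$: since $D-D_r$ is ample one has $\B_+(D_r)\supseteq\B_+(D)\ni Z$, so $Z\sub\B_+(D_r)$ and the $\QQ$--divisor case \cite[Thm.\ B]{bcl} forces $\vol_{X|Z}(D_r)=0$; by continuity the limiting value of $\vol_{X|Z}$ at $D$ vanishes. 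Second, continuity identifies $\inf_{r'}\vol_{X|Z}(D_{r'})$ with this same limiting value at $D$, which is therefore $0$. Combining with the comparison inequality gives $\vol_{X|Z}(D,(\ast))=0$, completing (a) and hence the theorem.

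The crux --- and the place where the real--coefficient setting genuinely differs from \cite{bcl} --- is the continuity of $\vol_{X|Z}$ at the real class $D$, together with the vanishing of the restricted volume along rational classes $D_r$ with $Z\sub\B_+(D_r)$. Over $\CC$ for smooth $X$ this is \cite[Thm.\ 5.7]{elmnp2}, and for $\QQ$--divisors on normal $X$ over an arbitrary field it is built into \cite[Thm.\ B]{bcl}; the task here is to transport these statements to real classes, and I expect this transport to be the main obstacle. I would carry it out by squeezing: the monotone comparison above, applied with rational $D_r\le D\le D_{r'}$ converging to $D$ from both sides, together with the $\QQ$--divisor continuity of \cite{bcl} and the homogeneity and monotonicity of $\vol_{X|Z}$, pins the common limiting value and forces it to be $0$. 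Conceptually this is Nakamaye's theorem: when $D$ is nef, Birkar's generalization \cite[Thm.\ 1.3]{bir} identifies the component $Z$ with a subvariety on which $(D_{|Z})^{\dim Z}=0$, so $D_{|Z}$ is not big and already $h^0(X|Z,\langle mD \rangle)\le h^0(Z,\langle mD \rangle_{|Z})=o(m^{\dim Z})$; the difficulty in the general big case is precisely that one must reach this conclusion through the above limiting process rather than by restricting to $Z$ directly.
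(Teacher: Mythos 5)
Your non-big case, your monotone sandwich $\vol_{X|Z}(D_r)\le\vol_{X|Z}(D,(\ast))\le\vol_{X|Z}(D_{r'})$ for rational $r<t<r'$, and your proof of (b) via openness of $\{E: Z\not\sub\B_+(E)\}$ are all fine and essentially reproduce Proposition \ref{volume} and the last paragraph of the paper's argument. The problem is step (a), which is the entire content of the theorem. Your sandwich only yields
\[ 0=\sup_{r<t}\vol_{X|Z}(D_r)\ \le\ \vol_{X|Z}(D,(\ast))\ \le\ \inf_{r'>t}\vol_{X|Z}(D_{r'}), \]
and nothing you have written forces the right-hand infimum to vanish. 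The assertion that ``continuity identifies $\inf_{r'}\vol_{X|Z}(D_{r'})$ with the limiting value from below'' is precisely the statement that $\vol_{X|Z}$ is continuous at the class of $D$ when $Z$ is a component of $\B_+(D)$ --- that is, \cite[Thm.~5.7]{elmnp2}. The known continuity of $\vol_{X|Z}$ (\cite[Thm.~A]{elmnp2}) lives only on the cone of classes $\xi$ with $Z\not\sub\B_+(\xi)$; your $D$ sits on the boundary of that cone from the $r'$-side and outside it from the $r$-side, so the two one-sided limits need not agree a priori (cf.\ the failure of continuity in \cite[Ex.~5.10]{elmnp2}, recalled in Remark \ref{ese}). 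Theorem 5.7 of \cite{elmnp2} closes exactly this gap, but it is proved only for smooth complex projective varieties, and avoiding it is the raison d'\^etre of \cite{bcl} and of the present paper; you cannot invoke it over a normal variety over an arbitrary algebraically closed field. So the squeeze does not close, and (a) is unproved.

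The paper's route for (a) is continuity-free and entirely different: using the diagram \eqref{diag} from the proof of Theorem \ref{max}, one shows that for $Z\not\sub\B(D,(\ast))$ the strict transform $Z_m$ is an irreducible component of $\Exc(f_m)$, hence $\dim\Phi_{\langle mD\rangle}(Z)<d$; identifying $\Phi_{\langle mD\rangle}(Z)$ with the image of the map defined by the restricted linear series $W_m=H^0(X|Z,\langle mD\rangle)$ then bounds the transcendence degree of the restricted ring $\R(X|Z,D,(\ast))$, giving $h^0(X|Z,\langle lD\rangle)\le Cl^{\kappa}$ with $\kappa<d$ and hence $\vol_{X|Z}(D,(\ast))=0$. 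If you want to salvage your outline you must replace the continuity appeal by an argument of this kind (or prove the required one-sided continuity at components of $\B_+$ in this generality, which is a substantial theorem in its own right).
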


\section{Volume and restricted volume of real divisors} 
\label{vol}
Throughout the paper we work over an arbitrary algebraically closed field $k$. An \emph{algebraic variety} is by definition an integral separated scheme of finite type over $k$. 

We set $\NN^+ = \{n \in \NN : n > 0\}$ and, given $x \in \RR$, $\{x\} = x - \lfloor x \rfloor$.

We will often use the following fact, proved in \cite[Thm.\ 1.3]{bir}. Birkar's theorem is deeper, as it proves Nakamaye's theorem (\cite[Thm.\ 0.3]{n}, \cite[\S 10.3]{l}) on arbitrary projective schemes over a field (not necessarily algebraically closed), namely that, if $D$ is nef, then $\B_+(D)$ coincides with the exceptional locus of $D$. On the other hand, for the part of  \cite[Thm.\ 1.3]{bir} that we use, in the proof given in \cite{bir}, the nefness of $D$ is not needed.

\begin{thm}
\label{bi13}
Let $X$ be a projective scheme over a field and let $D$ be an $\RR$-Cartier $\RR$-divisor on $X$ with an espression $(\ast)$ as in Definition {\rm \ref{esp}}. Let $H$ be a very ample Cartier divisor on $X$. Then there exists $m_0 \in \NN^+$ such that $\B_+(D) =  \B(\langle km_0 D \rangle - H) = \Bs |\langle km_0D \rangle - H|$ for all $k \in \NN^+$.
\end{thm}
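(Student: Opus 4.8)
The plan is to deduce the theorem from Birkar's identity $\B_+(D) = \bigcap_{m \in \NN} \B(\langle mD \rangle - H)$ (\cite[Lemma 3.1]{bir}), in which the inclusions $\B_+(D) \subseteq \B(\langle km_0 D\rangle - H) \subseteq \Bs|\langle km_0 D\rangle - H|$ hold automatically for every index. Thus the entire content is to produce a \emph{single} $m_0$ for which $\Bs|\langle km_0 D\rangle - H| \subseteq \B_+(D)$ for all $k \in \NN^+$, since then the displayed chain of inclusions collapses to the two asserted equalities. The one elementary observation driving the whole argument is that, by superadditivity of the floor function, every rounding residue of the form $\langle (m+m')D\rangle - \langle mD\rangle - \langle m'D\rangle$ or $\langle kmD\rangle - k\langle mD\rangle$ is a nonnegative integer combination $\sum_i a_i H_i$ of the very ample $H_i$, hence a globally generated class; in particular it carries, for any prescribed point $x$, a section not vanishing at $x$, and the same holds for any positive multiple of $H$.

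First I would fix the finite data. Since the underlying space of $X$ is Noetherian and $\bigcap_m \B(\langle mD\rangle - H) = \B_+(D)$, finitely many indices $m_1, \dots, m_r$ already satisfy $\bigcap_{j=1}^r \B(\langle m_j D\rangle - H) = \B_+(D)$.

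Next I would upgrade from the stable base locus to the genuine linear system at a single index. Given $x \notin \B_+(D)$, choose $j$ with $x \notin \B(\langle m_j D\rangle - H)$; then some multiple carries a section $\sigma \in H^0(X, p(\langle m_j D\rangle - H))$ with $\sigma(x) \neq 0$. Writing $m' = p\, m_j$, superadditivity gives $\langle m' D\rangle - H = p(\langle m_j D\rangle - H) + (p-1)H + R$ with $R$ base-point-free, so multiplying $\sigma$ by sections of $(p-1)H$ and of $R$ nonzero at $x$ yields a genuine section of $\langle m'D\rangle - H$ nonzero at $x$. Hence $\bigcap_{m'} \Bs|\langle m'D\rangle - H| = \B_+(D)$, and again by Noetherianity finitely many indices $m_1', \dots, m_t'$ suffice. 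Finally I would merge these into one modulus: set $m_0 = m_1'\cdots m_t'$, so each $m_l' \mid m_0$. For $x \notin \B_+(D)$ choose $l$ with a genuine section $\tau$ of $\langle m_l' D\rangle - H$ nonzero at $x$, and write $m_0 = d_l\, m_l'$. Superadditivity gives $\langle km_0 D\rangle - H = kd_l(\langle m_l' D\rangle - H) + (kd_l-1)H + R'$ with $R'$ base-point-free, so $\tau^{\otimes kd_l}$ times sections of $(kd_l-1)H$ and $R'$ nonzero at $x$ is a section of $\langle km_0 D\rangle - H$ nonzero at $x$. Thus $\Bs|\langle km_0 D\rangle - H| \subseteq \B_+(D)$ for every $k \in \NN^+$, which completes the proof.

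The main obstacle is precisely this residue control. A priori, passing from a workable index $m'$ to the multiples $km_0$ of a fixed modulus introduces fixed effective rounding divisors, and the naive subadditivity of base loci would let their supports leak into every estimate, obstructing any uniform choice of $m_0$. The crux is to recognize that these residues are nonnegative combinations of the very ample $H_i$ and are therefore base-point-free, so they contribute nothing to any base locus; this is exactly what allows the section-multiplication arguments to go through with a single $m_0$ valid for all multiples $k$ at once, rather than merely for large $m$.
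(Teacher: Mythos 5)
Your proposal is correct, but it cannot be said to follow the paper's route for the simple reason that the paper offers no proof of this statement at all: Theorem \ref{bi13} is imported from \cite[Thm.\ 1.3]{bir}, with only the remark that the nefness hypothesis appearing there is not needed for the part being used. What you have done instead is rederive the statement from the weaker input $\B_+(D)=\bigcap_{m}\B(\langle mD\rangle-H)$ (\cite[Lemma 3.1]{bir}, quoted in the paper's introduction), and the derivation is sound: the inclusions $\B_+(D)\subseteq\B(\langle mD\rangle-H)\subseteq\Bs|\langle mD\rangle-H|$ are indeed automatic; Noetherianity of $X$ correctly reduces the intersection of closed sets to finitely many indices; and superadditivity of $\lfloor\cdot\rfloor$ makes every residue $\langle kmD\rangle-k\langle mD\rangle$ a nonnegative integral combination of the very ample $H_i$, hence globally generated, which is exactly what lets you absorb the rounding errors and propagate a section nonvanishing at a given point from one index $m'_l$ to all multiples $km_0$ of the common modulus. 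Your argument buys a self-contained proof (modulo the elementary \cite[Lemma 3.1]{bir}) and makes completely transparent why nefness of $D$ plays no role, at the cost of about a page versus a one-line citation.

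One step you should make explicit: to pass from $x\notin\B(\langle m_jD\rangle-H)$ to a section of some multiple $p(\langle m_jD\rangle-H)$ nonvanishing at $x$, you need the identity $\B(E)=\bigcap_{p\geq1}\Bs|pE|$ for an integral Cartier divisor $E$, whereas the paper defines $\B(E)$ via effective $\RR$-divisors $\RR$-linearly equivalent to $E$. The two agree by the standard rational-polytope perturbation (the set of coefficients of an $\RR$-linear equivalence yielding an effective divisor avoiding a fixed point is a nonempty rational polytope, hence contains a rational point), but this deserves a sentence or a citation; it is an omission, not a gap.
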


Let $X$ be a projective variety and let $D$ be an $\RR$-Cartier $\RR$-divisor on $X$ with an espression $(\ast)$ as in Definition \ref{esp}. We start by defining a graded ring associated to $D$ and $(\ast)$.

\begin{rem} 
\label{formuletta}
For $a, b \in \NN^+, t_i \in \RR, 1 \leq i \leq s$, set $\gamma_i(a,b ) = \lfloor (a + b)t_i \rfloor -   \lfloor at_i \rfloor - \lfloor bt_i \rfloor$. Then
\begin{itemize}
\item[(i)] $\gamma_i(a, b) = 0, 1$ for all $i, a, b$; 
\item[(ii)] $\gamma_i(a, b) + \gamma_i(a+b, c) = \gamma_i(a, b+c) + \gamma_i(b, c)$ for all $i, a, b, c$; 
\item[(iii)] $ \langle (a + b) D \rangle =  \langle aD \rangle + \langle bD \rangle + \sum\limits_{i = 1}^s \gamma_i(a,b) H_i$.
\end{itemize}
\end{rem}
By (iii), choosing some divisors $E_i \in |H_i|$, we get a multiplication map
\begin{equation}
\label{molt}
H^0(X, \langle aD \rangle) \otimes H^0(X, \langle bD \rangle) \to H^0(X, \langle (a + b) D \rangle)
\end{equation}
and by (ii) this gives rise to a ring and to a semigroup.
\begin{defn}
\label{ring}
Let $X$ be a projective variety and let $D$ be an $\RR$-Cartier $\RR$-divisor on $X$ with an espression $(\ast)$ as in Definition {\rm \ref{esp}}. Let $E_i \in |H_i|$ for $1 \leq i \leq s$.
The associated graded ring is
\[ \R(X, D, (\ast)) = \bigoplus_{m \in \NN} H^0(X,  \langle mD \rangle) \]
and the associated semigroup is
\[ \N(X, D, (\ast)) = \{m \in \NN : H^0(X,  \langle mD \rangle) \neq 0 \}. \]
Let $Z \subseteq X$ be a subvariety and pick $E_i \in |H_i|$ so that $Z \not\subseteq \Supp(E_i)$ for all $i$. We define 
\[ H^0(X|Z, \langle mD \rangle) = \Im \{H^0(X,  \langle mD \rangle) \to H^0(Z,  \langle mD \rangle_{|Z})\}, \] 
\[ \R(X|Z, D, (\ast)) = \bigoplus_{m \in \NN} H^0(X|Z,  \langle mD \rangle) \] 
and 
\[ \N(X|Z, D, (\ast)) = \{m \in \NN : H^0(X|Z,  \langle mD \rangle) \neq 0 \}. \]
\end{defn}
\begin{rem}
\label{dip}
Note that $\N(X|Z, D, (\ast))$ and $\R(X|Z, D, (\ast))$ depend on $(\ast)$.
{\rm For example let $H$ be a very ample Cartier divisor on $X$ and let $H_i \in |H|, i = 1, 2$ with $H_1 \neq H_2$. Let $\alpha \in \RR - \QQ$ and let $D = \alpha H_1 - \alpha H_2$. If we use this expression as $(\ast)$ we get, for every $m \in \NN^+$, $\langle mD \rangle =  \lfloor m\alpha \rfloor H_1 + \lfloor - m\alpha \rfloor H_2 \sim_{\ZZ} - H$, whence  $\N(X, D, (\ast)) = \{0\}$ and $\R(X, D, (\ast)) = \CC$. But if we use as $(\ast)$ the expression $D \sim_{\RR} 0 H_1 + 0 H_2$ we get $\langle mD \rangle = 0$ and then $\N(X, D, (\ast)) = \NN$ and $\R(X, D, (\ast)) = \bigoplus_{m \in \NN} \CC$.}
\end{rem}

\begin{rem}
\label{homog1}
Let $\sigma_0, \ldots, \sigma_r \in H^0(X, \langle aD \rangle)$ for some $a \in \NN^+$. It is easy to prove that if $\sigma_0^{i_0} \cdot \ldots \cdot \sigma_r^{i_r}$ is a homogeneous product
of degree $d$ in $H^0(X, \langle daD \rangle)$ as in \eqref{molt}, then 
\[ \sigma_0^{i_0} \cdot \ldots \cdot \sigma_r^{i_r} = \sigma_0^{i_0} \ldots \sigma_r^{i_r} \prod\limits_{i=1}^s \varepsilon_i^{\lfloor dat_i \rfloor - d \lfloor at_i \rfloor} \]
where $\varepsilon_i \in H^0(X, H_i)$ is the section defining $E_i$ and the product on the right hand side is the usual product of sections of line bundles.
\end{rem}

Since we have a graded ring structure on $\R(X|Z, D, (\ast))$, it follows that the associated volume function is homogeneous of degree $\dim Z$. For completeness, we give a proof of this fact, in analogy with \cite[Lemma 3.2]{dp}. 

\begin{lemma} 
\label{homog}
Let $D$ be an $\RR$-Cartier $\RR$-divisor on a projective variety $X$ of dimension $n$ and fix an espression $(\ast)$ as in Definition {\rm \ref{esp}}. Let $Z \subseteq X$ be a subvariety of dimension $d > 0$. Then $\vol_{X|Z}(D, (\ast))$ is homogeneous of degree $d$, that is, for every $p \in \NN^+$,
\begin{itemize}
\item[(i)]  $\limsup\limits_{m \to + \infty} \frac{h^0(X,  \langle mD \rangle)}{m^n/n!} = \limsup\limits_{m \to + \infty} \frac{h^0(X,  \langle pmD \rangle)}{(pm)^n/n!}$; 
\item[(ii)] $\limsup\limits_{m \to + \infty} \frac{h^0(X|Z,  \langle mD \rangle)}{m^d/d!}= \limsup\limits_{m \to + \infty} \frac{h^0(X|Z,  \langle pmD \rangle)}{(pm)^d/d!}$.
\end{itemize}
\end{lemma}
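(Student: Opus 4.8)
The plan is to prove both (i) and (ii) by the same argument, treating them as the homogeneity of the $\limsup$ of a sequence attached to a graded ring; I will write the proof for (ii) (part (i) being the special case $Z=X$, $d=n$) and set $a_m=h^0(X|Z,\langle mD\rangle)$. One inequality is free: since the integers $pm$ form a subsequence of $\NN$, the family $\{a_{pm}/((pm)^d/d!)\}_m$ is a subsequence of $\{a_k/(k^d/d!)\}_k$, so its $\limsup$ is at most the full $\limsup$, which gives ``$\leq$'' in (ii). If the full $\limsup$ is $0$ there is nothing more to prove, so I may assume it is positive and, in particular, that $a_{m}>0$ along a maximizing subsequence.

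The key structural input I would isolate first is a superadditivity statement: for $a,b\in\NN$ with $H^0(X|Z,\langle bD\rangle)\neq 0$ one has $a_{a+b}\ge a_a$. This follows from the multiplication map \eqref{molt} together with Remark \ref{homog1}: fixing a nonzero $s\in H^0(X|Z,\langle bD\rangle)$, multiplication by $s$ sends $H^0(X|Z,\langle aD\rangle)$ into $H^0(X|Z,\langle(a+b)D\rangle)$, and since $Z$ is integral and $s\neq 0$ this map is injective. The same observation shows that $\N(X|Z,D,(\ast))$ is a submonoid of $(\NN,+)$ (it contains $0$ because $H^0(X|Z,\O_X)=k$).

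With superadditivity in hand, the reverse inequality ``$\geq$'' becomes a transport argument. Let $m_j\to+\infty$ be a maximizing subsequence for the full $\limsup$; then $m_j\in\N(X|Z,D,(\ast))$. Reducing this submonoid modulo $p$ yields a finite submonoid $G\subseteq\ZZ/p\ZZ$, which is therefore a subgroup and in particular closed under negation. Hence for each residue $r$ attained by some $m_j$ there is a fixed $b(r)\in\N(X|Z,D,(\ast))$ with $b(r)\equiv -r\pmod p$; as there are finitely many residues, these are bounded by a constant $B$. Choosing $b_j:=b(m_j\bmod p)\le B$, the integer $m_j+b_j$ is a multiple of $p$, say $m_j+b_j=p\,q_j$, and superadditivity gives $a_{pq_j}\ge a_{m_j}$, whence
\[
\frac{a_{p q_j}}{(pq_j)^d/d!}\;\ge\;\frac{a_{m_j}}{(m_j+b_j)^d/d!}\;=\;\frac{a_{m_j}}{m_j^d/d!}\Big(\tfrac{m_j}{m_j+b_j}\Big)^{d}.
\]
Since $b_j\le B$ is bounded while $m_j\to+\infty$, the last factor tends to $1$; taking $\limsup$ over $j$ (the $pq_j$ being terms of the sequence defining the right-hand side) yields ``$\geq$'' and completes (ii).

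The step I expect to be the main obstacle is precisely this reverse inequality, i.e.\ showing that sampling only multiples of $p$ does not lower the $\limsup$. The naive hope that $a_m$ is eventually monotone fails, because some $t_i$ may be negative, so the $H_i$-coefficients of $\langle mD\rangle$ need not all grow with $m$; consequently a direct comparison of $\langle mD\rangle$ with a nearby $\langle pq D\rangle$ by effective divisors is not available. The argument above sidesteps this by using only the ring/semigroup structure and the integrality of $Z$, reducing the delicate point to the elementary fact that a finite submonoid of $\ZZ/p\ZZ$ is a group, which is exactly what allows one to reach an exact multiple of $p$ while perturbing the degree by a bounded amount.
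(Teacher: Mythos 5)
Your proof is correct, and the geometric core coincides with the paper's: both arguments rest on the superadditivity $h^0(X|Z,\langle (a+b)D\rangle)\ge h^0(X|Z,\langle aD\rangle)$ obtained by multiplying by a section of $\langle bD\rangle$ (together with the $\varepsilon_i$'s) that does not vanish identically on $Z$, which is legitimate precisely because the $E_i$ were chosen with $Z\not\subseteq\Supp(E_i)$ and $Z$ is integral. Where you diverge is in the arithmetic bookkeeping. The paper introduces the exponent $e$ of the semigroup $\N(X|Z,D,(\ast))$ (every element is a multiple of $e$ and all large multiples of $e$ occur), rewrites both limsups as limsups over multiples of $e$ and of $ea$ respectively, and then runs a two-sided sandwich $h^0(X|Z,\langle eakD\rangle)\le h^0(X|Z,\langle e(ka+r)D\rangle)\le h^0(X|Z,\langle e(k+q)aD\rangle)$ in the style of Lazarsfeld's Lemma 2.2.38. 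You instead observe that one inequality is the trivial subsequence bound, and for the other you transport a maximizing subsequence $m_j$ to multiples of $p$ by adding a bounded correction $b_j\in\N(X|Z,D,(\ast))$ with $m_j+b_j\equiv 0\pmod p$, whose existence follows from the fact that the image of the semigroup in $\ZZ/p\ZZ$ is a subgroup. Your version is slightly more self-contained (no appeal to the exponent of a numerical semigroup or to an external lemma) and isolates the only nontrivial direction; the paper's version yields both directions at once and stays closer to the standard volume-homogeneity argument it is modeled on. Both are complete proofs.
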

\begin{proof} 
Since (i) is just the case $Z = X$ of (ii), let us prove (ii). We can assume that $\N(X|Z, D, (\ast)) \neq \{0\}$. Let $e = e(\N(X|Z, D, (\ast)))$ be the exponent of $\N(X|Z, D, (\ast))$. Then there is $r_0 \in \NN^+$ such that for every $r \geq r_0$ we have that $er \in \N(X|Z, D, (\ast))$ and moreover for every $m \in \N(X|Z, D, (\ast))$ we have that $e | m$. Then
\begin{equation}
\label{uno}
\limsup\limits_{m \to + \infty} \frac{h^0(X|Z,  \langle mD \rangle)}{m^d/d!} = \limsup\limits_{k \to + \infty} \frac{h^0(X|Z,  \langle ekD \rangle)}{(ek)^d/d!}.
\end{equation}
Now let $b = l.c.m.\{e, p\}$ so that $e = vb, p = ab$ with $l.c.m.\{v,a\} = 1$. If $pm \in  \N(X|Z, D, (\ast))$ then $e|pm$, whence $pm = eak$ for some $k \in \NN^+$ and therefore
\begin{equation}
\label{due}
\limsup\limits_{m \to + \infty} \frac{h^0(X|Z,  \langle pmD \rangle)}{(pm)^d/d!} = \limsup\limits_{k \to + \infty} \frac{h^0(X|Z,  \langle eakD \rangle)}{(eak)^d/d!}.
\end{equation}
Let $r \in \NN^+$ be such that $r_0 \leq r \leq r_0 + a$. Then we can find $D_r \in |\langle erD \rangle|$ such that $Z \not\subseteq \Supp(D_r)$ and, for every $q \in \NN^+$ such that $qa- (r_0 + a) \geq r_0$, we can find $D'_r \in |\langle e(qa-r)D \rangle|$ such that $Z \not\subseteq \Supp(D'_r)$. By Remark \ref{formuletta}(iii) we deduce that, for every $k \in \NN^+$,
\[ h^0(X|Z, \langle eakD \rangle) \leq h^0(X|Z, \langle e(ka+r)D \rangle) \leq h^0(X|Z, \langle e(k+q)aD \rangle). \] 
Now exactly as in the proof of \cite[Lemma 2.2.38]{l} we get that
\[ \limsup\limits_{k \to + \infty} \frac{h^0(X|Z,  \langle ekD \rangle)}{(ek)^d/d!} = \limsup\limits_{k \to + \infty} \frac{h^0(X|Z,  \langle eakD \rangle)}{(eak)^d/d!} \]
whence (ii) by \eqref{uno} and \eqref{due}.
\end{proof}

Let $D$ be an $\RR$-Cartier $\RR$-divisor on a projective variety $X$ and let $Z \subseteq X$ be a subvariety of dimension $d > 0$. How to define $\vol_{X|Z}(D)$? When $Z \not \subseteq \B_+(D)$ this is done in \cite{elmnp2}, as follows. Consider the cone ${\rm Big}^Z(X)^+_{\QQ}$ of divisor classes $\xi \in N^1(X)_{\QQ}$ such that $Z \not \subseteq \B_+(\xi)$. By \cite[Thm.\ A]{elmnp2} $\vol_{X|Z}$ is defined on ${\rm Big}^Z(X)^+_{\QQ}$ and extends uniquely to a continuous function on ${\rm Big}^Z(X)^+_{\RR}$. On the other hand if  $Z \subseteq \B_+(D)$ several problems arise, perhaps the most important one being the loss of continuity, as there exist examples \cite[Ex.\ 5.10]{elmnp2} of $\QQ$-Cartier $\QQ$-divisors $D_i$ such that $\lim\limits_{i \to \infty} D_i = D$ but $\vol_{X|Z}(D) \neq \lim\limits_{i \to \infty} \vol_{X|Z}(D_i)$. One possibility to go around this problem is to define, as in \cite[Def.\ 2.12]{leh}, $\vol_{X|Z}$ using the integer parts, that is 
\[ \vol_{X|Z}(D) = \limsup\limits_{m \to + \infty} \frac{h^0(X|Z,  \lfloor mD \rfloor)}{m^d/d!}. \]
We want to point out here that this definition does not agree with $\vol_{X|Z}(D, (\ast))$, and, even more, it can happen that one is zero and the other one is not, as in the following example. 

\begin{rem} (\cite[Ex.\ 5.10]{elmnp2})
\label{ese}
Let $R \subset \PP^3$ be a line and let $\pi : X \to \PP^3$ be the blowing up of $R$ with exceptional divisor $E$. Let $H$ be a plane in $\PP^3$ not containing $R$ and let ${\widetilde H}$ be its strict trasform on $X$. Let $\alpha \in \RR^+ - \QQ$ and $D = \alpha {\widetilde H}$. Let $C$ be a curve of type (2,1) on $E \cong \PP^1 \times \PP^1$. We claim that $\limsup\limits_{m \to + \infty} \frac{h^0(X|C,  \lfloor mD \rfloor)}{m} = \alpha$ while there exists an expression $(\ast)$ as in Definition {\rm \ref{esp}} such that $\vol_{X|C}(D, (\ast)) = 0$.

\noindent {\rm To see this first notice that, as in \cite[Ex.\ 5.10]{elmnp2}, we have that $h^0(X|C,  \lfloor mD \rfloor) = h^0(X|C, \lfloor m\alpha \rfloor {\widetilde H}) = \lfloor m\alpha \rfloor + 1$ and therefore 
\[ \limsup\limits_{m \to + \infty} \frac{h^0(X|C,  \lfloor mD \rfloor)}{m} = \alpha. \]
Now let $A = a {\widetilde H} - E$ for $a \gg 0$ so that $A$ is ample and let $s \gg 0$ be such that $H_2 := sA$ and $H_1 := {\widetilde H} + sA$ are very ample. Then we have the expression 
\[ (\ast) \ \ D = \alpha H_1 - \alpha H_2 \]
as in Definition \ref{esp} and $\langle mD \rangle =  \lfloor m\alpha \rfloor {\widetilde H} - H_2 = (\lfloor m\alpha \rfloor -sa) {\widetilde H} + sE$. But now either $\Bs |\langle mD \rangle| = X$ or $\Bs |\langle mD \rangle| = E$, so that, for all $m \in \NN^+$ we have $C \subset \Bs |\langle mD \rangle|$, whence $h^0(X|C, \langle mD \rangle) = 0$ and $\vol_{X|C}(D, (\ast)) = 0$.}
\end{rem}

This type of phenomenon does not happen when $Z \not \subseteq \B_+(D)$:

\begin{prop} 
\label{volume}
Let $D$ be an $\RR$-Cartier $\RR$-divisor on a projective variety $X$ of dimension $n$ and fix an espression $(\ast)$ as in Definition {\rm \ref{esp}}. Then 
\begin{itemize}
\item[(i)] $vol(D) = \limsup\limits_{m \to + \infty} \frac{h^0(X,  \langle mD \rangle)}{m^n/n!}$; 
\item[(ii)] For every subvariety $Z \subseteq X$ of dimension $d > 0$ such that $Z \not \subseteq \B_+(D)$ we have $vol_{X|Z}(D) = \vol_{X|Z}(D, (\ast))$. 
\end{itemize}
\end{prop}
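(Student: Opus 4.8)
The plan is to reduce both statements to the already-understood case of $\QQ$-divisors and then pass to the limit using the continuity of the ordinary volume functions, which are (by \cite[Thm.\ A]{elmnp2} and the discussion preceding Remark \ref{ese}) the continuous extensions of their values at rational classes. Throughout, the crucial point is that the expression $(\ast)$ is kept fixed, so all the comparisons take place among the \emph{same} very ample divisors $H_i$.

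\emph{Base case.} If $D' = \sum_i t'_i H_i$ with all $t'_i \in \QQ$, pick $q \in \NN^+$ with $q t'_i \in \ZZ$ for all $i$. Then for every $m$ divisible by $q$ one has $\langle mD'\rangle = \sum_i m t'_i H_i = m D'$ as Cartier divisors, so $h^0(X,\langle mD'\rangle) = h^0(X, mD')$ and $h^0(X|Z,\langle mD'\rangle)=h^0(X|Z,mD')$. Applying Lemma \ref{homog} with $p = q$ identifies the relevant $\limsup$ over all $m$ with the $\limsup$ over $m \in q\NN$, and the latter is by definition $\vol(D')$, resp.\ $\vol_{X|Z}(D')$. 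Hence (i) and (ii) hold for $\QQ$-divisors.

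\emph{The sandwich.} For the general $D = \sum_i t_i H_i$ and any $\e>0$ choose rationals $t_i^- \le t_i \le t_i^+$ with $t_i^+ - t_i^- < \e$, and set $D^\pm = \sum_i t_i^\pm H_i$, whose classes converge to $[D]$ as $\e \to 0$. The monotonicity $\lfloor m t_i^-\rfloor \le \lfloor m t_i\rfloor \le \lfloor m t_i^+\rfloor$ gives, for every $m$, an inequality $\langle mD^-\rangle \le \langle mD\rangle \le \langle mD^+\rangle$ of Cartier divisors whose successive differences are effective and supported on $\bigcup_i \Supp(E_i)$. Choosing the representatives $E_i \in |H_i|$ of Definition \ref{ring} so that $Z \not\subseteq \Supp(E_i)$, multiplication by the section cutting out each difference (cf.\ Remark \ref{homog1}) injects global sections and, since the difference does not contain $Z$, restricts to a nonzero section on $Z$ and hence injects restricted sections as well; this yields, for all $m$,
\[ h^0(X|Z, \langle mD^-\rangle) \le h^0(X|Z,\langle mD\rangle) \le h^0(X|Z,\langle mD^+\rangle) \]
together with the analogous chain with $Z=X$ for (i). Dividing by $m^d/d!$ (resp.\ $m^n/n!$), taking $\limsup$, and using the base case for the $\QQ$-divisors $D^\pm$ gives
\[ \vol_{X|Z}(D^-) \le \vol_{X|Z}(D,(\ast)) \le \vol_{X|Z}(D^+). \]

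\emph{Passing to the limit.} For (i) the ordinary volume is continuous on all of $N^1(X)_\RR$, so $\vol(D^\pm) \to \vol(D)$ and the squeeze yields (i) with no bigness hypothesis. For (ii), since $Z \not\subseteq \B_+(D)$ the class $[D]$ lies in the \emph{open} cone ${\rm Big}^Z(X)^+$, so for $\e$ small the approximants $D^\pm$ lie there too; then $\vol_{X|Z}(D^\pm)$ are defined and, by the continuity of $\vol_{X|Z}$ on ${\rm Big}^Z(X)^+_\RR$, converge to $\vol_{X|Z}(D)$, and the squeeze gives (ii). The step I expect to be the main obstacle is not the formal squeeze but justifying the two ordinary-volume inputs in the present generality: one must know that $\vol_{X|Z}$ is genuinely defined and continuous on the real cone ${\rm Big}^Z(X)^+_\RR$ for a normal variety over an arbitrary algebraically closed field — the role played by \cite[Thm.\ A]{elmnp2} in the smooth complex case — and that the rational approximants can be kept inside this open cone. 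The verification that adjoining an effective divisor missing $Z$ injects restricted linear series is elementary once the $E_i$ are chosen off $Z$, but it is precisely the place where the choice of representatives and the hypothesis $Z \not\subseteq \B_+(D)$ must interact correctly.
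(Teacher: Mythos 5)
Your proposal is correct and follows essentially the same route as the paper: sandwich $h^0(X|Z,\langle mD\rangle)$ between the corresponding quantities for rational approximants built from the \emph{same} $H_i$ (with representatives $E_i$ chosen off $Z$), identify the rational-case $\limsup$ with the ordinary (restricted) volume via Lemma \ref{homog}, and conclude by continuity of $\vol$ and of $\vol_{X|Z}$ on ${\rm Big}^Z(X)^+_{\RR}$ (the paper cites \cite[Cor.\ 2.2.45]{l} and \cite[Thm.\ 5.2(a)]{elmnp2}, noting the latter's proof works over any algebraically closed field). The point you flag as the main obstacle is handled in exactly this way in the paper, and your observation that $Z\not\subseteq\B_+(D^{\pm})$ for small $\e$ by openness of the cone is the same implicit step.
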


\begin{proof} For $1 \leq i \leq s$ let $q_{il}, q'_{il}$, $l \in \NN^+$ be two sequences of rational numbers such that $q_{il} \leq t_i, q'_{il} \geq t_i$ for all $l$ and $\lim\limits_{l \to +\infty} q_{il} = \lim\limits_{l \to +\infty} q'_{il} = t_i$. Set $D_l = \sum\limits_{i = 1}^s q_{il} H_i, D'_l = \sum\limits_{i = 1}^s q'_{il} H_i$, so that $D_l$ and $D'_l$ converge to $D$ in $N^1(X)_{\RR}$. Pick $E_i \in |H_i|$. Then, for every $m \in \NN^+$, we have
\[ \langle mD_l \rangle = \sum\limits_{i = 1}^s \lfloor mq_{il} \rfloor H_i \sim_{\ZZ} \sum\limits_{i = 1}^s \lfloor mq_{il} \rfloor E_i \leq \sum\limits_{i = 1}^s \lfloor mt_i \rfloor E_i \sim_{\ZZ} \sum\limits_{i = 1}^s \lfloor mt_i \rfloor H_i =  \langle mD \rangle \]
and similarly
\[ \langle mD \rangle \sim_{\ZZ} \sum\limits_{i = 1}^s \lfloor mt_i \rfloor E_i \leq \sum\limits_{i = 1}^s \lfloor mq'_{il} \rfloor E_i \sim_{\ZZ}  \langle mD'_l \rangle \]
so that
\[ h^0(X, \langle mD_l \rangle) \leq h^0(X, \langle mD \rangle) \leq h^0(X, \langle mD'_l \rangle). \]
Let $p_l, p'_l \in \NN^+$ be such that $p_l D_l$ and $p'_l D'_l$ are Cartier. Note that $m p_l D_l = \langle mp_lD_l \rangle$ and $m p'_l D'_l = \langle mp'_lD'_l \rangle$. Then Lemma \ref{homog}(i) gives
\[ \vol(D_l) = \frac{1}{p_l^n} \vol(p_l D_l) = \limsup\limits_{m \to + \infty} \frac{h^0(X,  \langle mp_lD_l \rangle)}{(mp_l)^n/n!} = \limsup\limits_{m \to + \infty} \frac{h^0(X,  \langle mD_l \rangle)}{m^n/n!} \leq \]
\[ \leq \limsup\limits_{m \to + \infty} \frac{h^0(X,  \langle mD \rangle)}{m^n/n!} \leq \limsup\limits_{m \to + \infty} \frac{h^0(X,  \langle mD'_l \rangle)}{m^n/n!} = \limsup\limits_{m \to + \infty} \frac{h^0(X,  \langle mp'_lD'_l \rangle)}{(mp'_l)^n/n!} = \]
\[ = \frac{1}{(p'_l)^n} \vol(p'_l D'_l) = \vol(D'_l) \]
By \cite[Cor.\ 2.2.45]{l} we get that
\[ \vol(D) = \lim\limits_{l \to + \infty}  \vol(D_l) \leq \limsup\limits_{m \to + \infty} \frac{h^0(X,  \langle mD \rangle)}{m^n/n!} \leq  \lim\limits_{l \to + \infty}  \vol(D'_l) = \vol(D) \]
and this proves (i). To see (ii) we follow the above proof but now choose $E_i \in |H_i|$ such that $Z \not\subseteq \Supp(E_i)$ for all $i$. Then, for $m \in \NN^+$, it follows as above that
\[ h^0(X|Z, \langle mD_l \rangle) \leq h^0(X|Z, \langle mD \rangle) \leq h^0(X|Z, \langle mD'_l \rangle) \]
and Lemma \ref{homog}(ii) gives
\[ \vol_{X|Z}(D_l) \leq \vol_{X|Z}(D, (\ast)) \leq \vol_{X|Z}(D'_l). \]
By \cite[Thm.\ 5.2(a)]{elmnp2} (we note that the proof works on any a projective variety defined over an algebraically closed field) we get as above that
\[ \vol_{X|Z}(D) = \lim\limits_{l \to + \infty}  \vol_{X|Z}(D_l) \leq \vol_{X|Z}(D, (\ast)) \leq  \lim\limits_{l \to + \infty}  \vol_{X|Z}(D'_l) = \vol_{X|Z}(D) \]
and this proves (ii).
\end{proof}

\section{Stable and augmented base loci} 
\label{stab}

Let $X$ be a projective variety and let $D$ be an $\RR$-Cartier $\RR$-divisor on $X$ with an espression $(\ast)$ as in Definition \ref{esp}. We will study stable and augmented base loci associated to $D$ in terms of $(\ast)$, in particular $\B(D, (\ast))$ (see Definition \ref{b'}).

Note that $\B(D, (\ast))$ depends on $(\ast)$. In fact in the example of Remark \ref{dip} we get $\B(D, (\ast)) = X$ when we use $D = \alpha H_1 - \alpha H_2$ and $\B(D, (\ast)) =\emptyset$ when we use $D \sim_{\RR} 0 H_1 + 0 H_2$.

Nevertheless this stable base locus is in between $\B(D)$ and $\B_+(D)$.
\begin{lemma} 
\label{b'stab}
Let $D$ be an $\RR$-Cartier $\RR$-divisor on a projective variety $X$ and fix an espression $(\ast)$ as in Definition {\rm \ref{esp}}. Then
\begin{itemize}
\item[(i)]  there exists $m_0 \in \NN^+$ such that $\B(D, (\ast)) = \Bs |\langle km_0D \rangle|$ for all $k \in \NN^+$;
\item[(ii)] $\B(D) \subseteq \B(D, (\ast)) \subseteq \B_+(D)$;
\item[(iii)] there exists $m_1 \in \NN^+$ such that $\B_+(D) = \B_+(\langle km_1D \rangle)$ for all $k \in \NN^+$.
\end{itemize}
\end{lemma}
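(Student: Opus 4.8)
The plan is to handle (i) and (ii) by elementary manipulations of base loci resting on the base-point-freeness of the $H_i$ and the multiplicativity of Remark \ref{formuletta}(iii), and then to reduce (iii) to Birkar's Theorem \ref{bi13} together with two soft monotonicity properties of $\B_+$: invariance under positive rescaling and the fact that adding a nef class can only shrink it (both immediate from the definition, since ample $+$ nef $=$ ample).

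For (i), the crucial point is a sub-multiplicativity of the base loci. The multiplication \eqref{molt} sends $\sigma\otimes\tau$ to $\sigma\cdot\tau\cdot\prod_i\varepsilon_i^{\gamma_i(a,b)}$, and since each $H_i$ is very ample we may, for any fixed $x$, choose the $E_i\in|H_i|$ to avoid $x$; hence $x\notin\Bs|\langle aD\rangle|$ and $x\notin\Bs|\langle bD\rangle|$ force $x\notin\Bs|\langle (a+b)D\rangle|$, i.e. $\Bs|\langle (a+b)D\rangle|\sub\Bs|\langle aD\rangle|\cup\Bs|\langle bD\rangle|$. Taking $b$ a multiple of $a$ and inducting gives $\Bs|\langle kaD\rangle|\sub\Bs|\langle aD\rangle|$ for all $k$. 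I would then invoke Noetherianity: the intersection $\B(D,(\ast))=\bigcap_m\Bs|\langle mD\rangle|$ is already realized by finitely many indices $p_1,\dots,p_r$, so putting $m_0=p_1\cdots p_r$ the divisibility $p_j\mid m_0$ yields $\Bs|\langle m_0D\rangle|\sub\bigcap_j\Bs|\langle p_jD\rangle|=\B(D,(\ast))$, while the reverse inclusion is definitional. The same divisibility argument upgrades this to $\B(D,(\ast))=\Bs|\langle km_0D\rangle|$ for every $k\in\NN^+$.

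For (ii), the inclusion $\B(D)\sub\B(D,(\ast))$ amounts to the following: if $x\notin\Bs|\langle mD\rangle|$ for some $m$, pick an effective $F\sim_\ZZ\langle mD\rangle$ and effective $G_i\in|H_i|$, all avoiding $x$; then $E=\frac1m(F+\sum_i\{mt_i\}G_i)$ is an effective $\RR$-divisor with $E\sim_\RR D$ (since $mE\sim_\RR\langle mD\rangle+\sum_i\{mt_i\}H_i=mD$) and $x\notin\Supp E$, so $x\notin\B(D)$. For $\B(D,(\ast))\sub\B_+(D)$ I would use Theorem \ref{bi13} to write $\B_+(D)=\Bs|\langle m_0D\rangle-H|$ for a fixed very ample $H$; if $x\notin\B_+(D)$ there is $\sigma\in H^0(X,\langle m_0D\rangle-H)$ with $\sigma(x)\neq0$, and multiplying $\sigma$ by the section of some $G\in|H|$ avoiding $x$ gives a section of $\langle m_0D\rangle$ nonzero at $x$, whence $x\notin\Bs|\langle m_0D\rangle|$ and therefore $x\notin\B(D,(\ast))$ as $\B(D,(\ast))\sub\Bs|\langle m_0D\rangle|$.

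Finally (iii) carries the real content. I would set $m_1=m_0$, the modulus produced by Theorem \ref{bi13} applied to $D$ with the fixed very ample $H$, so that $\Bs|\langle km_1D\rangle-H|=\B_+(D)$ for all $k$. The inclusion $\B_+(D)\sub\B_+(\langle km_1D\rangle)$ is soft: writing $\langle km_1D\rangle=km_1D-Q_k$ with $Q_k=\sum_i\{km_1t_i\}H_i$ nef, we have $km_1D=\langle km_1D\rangle+Q_k$, so adding the nef class $Q_k$ gives $\B_+(D)=\B_+(km_1D)\sub\B_+(\langle km_1D\rangle)$. The reverse inclusion is the main obstacle: both rescaling invariance and nef-monotonicity produce exactly this same containment, so a genuinely different argument is required. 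I would prove the purely Cartier statement that for any integral divisor $L$ and very ample $H$ one has $\B_+(L)\sub\Bs|L-H|$, using openness of the ample cone: a point $x\notin\Bs|L-H|$ lies off some effective $F\sim L-H$, and then $L-(1-\e)F=H+\e F$ is ample for small $\e$ while $(1-\e)F\geq0$ avoids $x$, so $x\notin\B_+(L)$. Applying this to $L=\langle km_1D\rangle$ yields $\B_+(\langle km_1D\rangle)\sub\Bs|\langle km_1D\rangle-H|=\B_+(D)$, which finishes (iii); the delicate match of the augmented base locus of the integral divisor $\langle km_1D\rangle$ with $\B_+(D)$ passes precisely through the stabilization guaranteed by Theorem \ref{bi13}.
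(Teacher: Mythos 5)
Your proposal is correct and follows essentially the same route as the paper: (i) via the sub-multiplicativity $\Bs|\langle lmD\rangle|\subseteq\Bs|\langle mD\rangle|$ from Remark \ref{formuletta}(iii) plus Noetherianity, (ii) via the same effective $\RR$-divisor $\frac1m(F+\sum_i\{mt_i\}G_i)\sim_\RR D$ (you state it contrapositively) together with Theorem \ref{bi13} and base-point-freeness of $H$, and (iii) via Theorem \ref{bi13} combined with scaling invariance, nef-monotonicity of $\B_+$, and the containment $\B_+(L)\subseteq\Bs|L-H|$. The only cosmetic difference is that you spell out this last containment with an $\e$-perturbation where the paper invokes it directly; nothing is missing.
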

\begin{proof} 
To see (i), as in the case of the stable base locus of a Cartier divisor \cite[Prop.\ 2.1.21]{l}, it is enough to notice that, by Remark \ref{formuletta}(iii), it follows that $\Bs |\langle lmD \rangle| \subseteq \Bs |\langle mD \rangle|$, for every $m, l \in \NN^+$. As for the first inclusion in (ii), let $x \in \B(D)$, let $E \in |\langle mD \rangle|$ and let $E_i \in |H_i|$ such that $x \not\in \Supp(E_i)$ for all $i$. Then $F := \frac{1}{m}(E + \sum\limits_{i = 1}^s \{mt_i\} E_i) \sim_{\RR}   \frac{1}{m} \sum\limits_{i = 1}^s mt_i H_i \sim_{\RR} D$ and $F \geq 0$, whence $x \in \Supp(F)$ and then $x \in \Supp(E)$. Therefore $x \in \Bs |\langle mD \rangle|$. 

Now let $H$ be a very ample Cartier divisor on $X$. By Theorem \ref{bi13} there exists $m_1 \in \NN^+$ such that $\B_+(D) =  \B(\langle km_1D \rangle - H) = \Bs |\langle km_1D \rangle - H|$ for all $k \in \NN^+$. Hence by (i) there is $m_2 \in \NN^+$ such that $\B(D, (\ast)) =  \Bs |\langle m_2D \rangle|$ and  $\B_+(D) =  \Bs |\langle m_2D \rangle - H|$, whence $\B(D, (\ast)) \subseteq \B_+(D)$. Finally note that $km_1 D - \langle km_1D \rangle \sim_{\RR} \sum\limits_{i = 1}^s \{km_1t_i\} H_i$ is zero or ample, whence
\[ \B_+(D) = \B_+(km_1D) \subseteq  \B_+(\langle km_1D \rangle) \subseteq \B(\langle km_1D \rangle - H) = \B_+(D) \]
and this gives (iii).
\end{proof}
Note that (iii) above is not needed in the sequel. We just put as it could be useful to know.

We now consider the behavior of the maps associated to $D$ and $(\ast)$.

\begin{lemma}
\label{maxopen}
Let $D$ be an $\RR$-Cartier $\RR$-divisor on a projective variety $X$ and fix an espression $(\ast)$ as in Definition {\rm \ref{esp}}. Then
\begin{itemize}
\item[(i)]  $D$ is big if and only if there exists $m_0 \in \NN^+$ such that $\Phi_{\langle mD \rangle} : X \dashrightarrow \PP H^0(X,\langle mD \rangle)$ is birational onto its image for every $m \geq m_0$.
\item[(ii)] Assume that $D$ is big. For every $m \in \NN^+$ such that $\Phi_{\langle mD \rangle}$ is birational onto its image, let $U_m(\ast) \subseteq X - \B(D, (\ast))$ be the largest open subset on which $\Phi_{\langle mD \rangle}$ is an isomorphism. Then the set $\{U_m(\ast) \}$ has a unique maximal element $U_D(\ast)$, that is there exists $m_0 \in \NN^+$ such that $U_D(\ast) = U_{km_0}(\ast)$ for all $k \in \NN^+$. 
\end{itemize}
\end{lemma}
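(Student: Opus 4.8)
The plan is to prove the two parts in turn, deriving (ii) from the birationality established in (i) together with the multiplicative structure of the ring $\R(X,D,(\ast))$ recorded in Remark \ref{homog1}. For the ``if'' direction of (i) I would argue that a single birational $\Phi_{\langle mD \rangle}$ already forces $D$ to be big: if $\Phi_{\langle mD \rangle}$ is birational onto its image then that image has dimension $n=\dim X$, so the Cartier divisor $\langle mD \rangle$ is big; since $mD-\langle mD \rangle \sim_{\RR} \sum_i \{mt_i\}H_i \geq 0$ is effective, $mD$ is the sum of a big and an effective divisor, hence big, and so is $D$. For the ``only if'' direction, fix a very ample Cartier divisor $H$; I would show that $\langle mD \rangle - H$ is effective for all large $m$, after which multiplication by a section cutting out a member $G$ of $|\langle mD \rangle - H|$ embeds $H^0(X,H)$ as a subsystem of $|\langle mD \rangle|$ with fixed part $G$ and moving part $|H|$, whose associated map is the very ample embedding $\Phi_{|H|}$; thus $\Phi_{\langle mD \rangle}$ refines an embedding on $X \setminus \Supp G$ and is birational onto its image.

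The crux, and the step I expect to be hardest, is proving $\langle mD \rangle - H$ effective for \emph{every} large $m$ rather than only along an arithmetic progression, which is all that Theorem \ref{bi13} directly supplies. I would bound from below by a rational big divisor: choose rational $q_i \leq t_i$ with $D'=\sum q_i H_i$ still big (bigness being open) and $p$ with $pD'$ Cartier. Since $\lfloor mq_i \rfloor \leq \lfloor mt_i \rfloor$, the divisor $\langle mD \rangle - \langle mD' \rangle$ is effective, so $h^0(X,\langle mD \rangle - H) \geq h^0(X,\langle mD' \rangle - H)$. Writing $m=pl+j$ with $0 \leq j < p$ gives $\langle mD' \rangle = l(pD') + F_j$ for a fixed divisor $F_j$ depending only on $j$; splitting $F_j - H = (F_j-H)^+ - (F_j-H)^-$ and using the restriction sequence for the effective part $(F_j-H)^-$, I get $h^0(X,l(pD')+F_j-H) \geq h^0(X,l(pD')) - h^0\bigl((F_j-H)^-,\, l(pD')|_{(F_j-H)^-}\bigr)$. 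The first term grows like $l^n$ because $pD'$ is big, while the second grows at most like $l^{n-1}$ since $(F_j-H)^-$ has dimension $\leq n-1$; hence the difference is positive for all large $l$, uniformly over the finitely many residues $j$. This yields $\langle mD \rangle - H$ effective for all large $m$ and completes (i).

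For (ii) I would first establish the monotonicity $U_m(\ast) \sub U_{km}(\ast)$ for all $k$. By Remark \ref{homog1}, every degree-$k$ product of sections of $\langle mD \rangle$ equals the honest product of sections times the single fixed factor $\prod_i \varepsilon_i^{\lfloor kmt_i \rfloor - k\lfloor mt_i \rfloor}$; thus the subsystem of $|\langle kmD \rangle|$ spanned by these products induces, off the fixed divisor $\sum_i (\lfloor kmt_i \rfloor - k\lfloor mt_i \rfloor)E_i$, the composite of $\Phi_{\langle mD \rangle}$ with a Veronese embedding. Given $x \in U_m(\ast)$ (and, for injectivity, a second point $y \in U_m(\ast)$), I would choose the auxiliary sections $E_i \in |H_i|$ to avoid $x$ (and $y$): then the fixed factor is a local unit there, so points and tangent vectors separated by $\langle mD \rangle$ are separated by these products, a fortiori by the full system $\langle kmD \rangle$. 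Hence $\Phi_{\langle kmD \rangle}$ is an isomorphism on $U_m(\ast)$, giving $U_m(\ast) \sub U_{km}(\ast)$; multiplication by a nonzero section of $\langle bD \rangle$, which exists for large $b$ by the effectivity above, gives likewise $U_a(\ast) \sub U_{a+b}(\ast)$, so the family is directed upward.

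Finally I would take the union $U_\infty = \bigcup_m U_m(\ast)$ over those $m$ for which $\Phi_{\langle mD \rangle}$ is birational, which by (i) is every large $m$. As an open subset of the Noetherian space $X$ it is quasi-compact, so $U_\infty = U_{m_1}(\ast) \cup \cdots \cup U_{m_r}(\ast)$ for finitely many $m_i$; setting $m_0$ to be a common multiple of the $m_i$ lying in the birational range, monotonicity gives $U_{m_i}(\ast) \sub U_{m_0}(\ast)$, whence $U_{m_0}(\ast)=U_\infty$ is the maximal element, and $U_{m_0}(\ast) \sub U_{km_0}(\ast) \sub U_\infty = U_{m_0}(\ast)$ forces $U_{km_0}(\ast)=U_{m_0}(\ast)$ for all $k$. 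The delicate point throughout (ii) is that $U_m(\ast)$ is defined intrinsically from $\Phi_{\langle mD \rangle}$ whereas the comparison uses the $E_i$-dependent ring structure; this is reconciled by the freedom to choose the $E_i$ away from any prescribed finite set of points, which is exactly what the pointwise separation arguments require.
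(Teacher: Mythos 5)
Your argument is correct, and part (ii) is essentially the paper's proof (monotonicity $U_m(\ast)\subseteq U_{km}(\ast)$ via the multiplicative structure of Remark \ref{homog1}, then Noetherianity -- you phrase it as quasi-compactness of the union of opens, the paper as the descending chain condition on the closed complements $Y_m(\ast)=X-U_m(\ast)$; these are the same argument). Where you genuinely diverge is the ``only if'' direction of (i). The paper gets the effectivity of $\langle mD\rangle-A$ for \emph{one} $m_1$ from Birkar's Theorem \ref{bi13} (nonemptiness of $|\langle m_1D\rangle-A|$ since $\B_+(D)\neq X$), and then propagates to all large $m$ by a semigroup-exponent trick: choosing $A$ so that $A+\langle D\rangle$ is globally generated forces both $m_1$ and $m_1+1$ into $\N(X,D,(\ast))$, so the semigroup has exponent $1$. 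You instead avoid Theorem \ref{bi13} entirely and prove $h^0(X,\langle mD\rangle-H)>0$ for all large $m$ by bounding below with a rational big approximation $D'=\sum q_iH_i$, $q_i\leq t_i$, splitting $m=pl+j$ over the finitely many residues $j$, and comparing the $l^n$ growth of $h^0(l(pD'))$ against the $O(l^{n-1})$ correction from the restriction sequence. This is a sound and more self-contained route (it uses only Kodaira-type bigness estimates for Cartier divisors rather than the augmented-base-locus machinery), at the cost of being longer; the paper's version is shorter because Theorem \ref{bi13} is already in hand and used throughout. One small caveat: your side claim that multiplication by a single nonzero section of $\langle bD\rangle$ gives $U_a(\ast)\subseteq U_{a+b}(\ast)$ is not justified as stated, since that section's zero divisor is fixed and may meet $U_a(\ast)$ (unlike the $E_i$, which you can move off any finite set); fortunately you never use this -- the common-multiple monotonicity $U_{m_i}(\ast)\subseteq U_{m_0}(\ast)$ suffices for your concluding step -- so it should simply be dropped.
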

\begin{proof}
To see (i) assume that $D$ is big and let $A$ be a sufficiently ample Cartier divisor such that $A +  \langle D \rangle$ is globally generated. By Theorem \ref{bi13} there exists $m_1 \in \NN^+$ such that $\B_+(D) =  \Bs |\langle m_1D \rangle - A|$, so that there is $E \in |\langle m_1D \rangle - A|$. Then $\langle m_1D \rangle \sim_{\ZZ} A + E$ and therefore $m_1 \in \N(X, D, (\ast))$. By Remark \ref{formuletta}(iii) there is an effective Cartier divisor $F$ on $X$ such that $\langle (m_1+1) D \rangle \sim_{\ZZ} \langle m_1 D \rangle + \langle D \rangle + F$. Hence $\langle (m_1+1) D \rangle \sim_{\ZZ} A + \langle D \rangle + E + F$ and therefore $m_1+1 \in \N(X, D, (\ast))$. Then $\N(X, D, (\ast))$ has exponent $1$ and there is $r_0 \in \NN^+$ such that $H^0(X, \langle rD \rangle) \neq 0$ for every $r \geq r_0$. Now for every $m \geq m_0:= m_1 + r_0$, we get, by Remark \ref{formuletta}(iii), that we can write $\langle mD \rangle \sim_{\ZZ} \langle m_1D \rangle + \langle (m-m_1)D \rangle + H$ with $H$ zero or very ample. Hence $\langle mD \rangle \sim_{\ZZ} A + E + E' + H$ for some effective Cartier divisor $E'$ and therefore $\Phi_{\langle mD \rangle}$ is an isomorphism over $X - \Supp(E \cup E')$. On the other hand if $\Phi_{\langle mD \rangle}$ is birational onto its image for some $m$, then $\langle mD \rangle$ is big and so is $D$ since $mD \sim_{\RR} \langle mD \rangle + \sum_{i = 1}^s \{mt_i\} H_i$.

To see (ii) note that, for all $m \in \NN^+$ such that $\Phi_{\langle mD \rangle}$ is birational onto its image and for all $k \in \NN^+$, we have by Remark \ref{formuletta}(iii) that also $\Phi_{\langle kmD \rangle}$ is birational onto its image, whence $U_m(\ast) \subseteq U_{km}(\ast) $. If $Y_m(\ast) = X - U_m(\ast)$ we then have $Y_m(\ast) \supseteq Y_{km}(\ast)$ for all $k \in \NN^+$, whence there is a unique minimal element $Y_{m_0}(\ast) = Y_{km_0}(\ast)$ for all $k \in \NN^+$ and therefore a unique maximal element $U_D(\ast) = U_{km_0}(\ast)$ for all $k \in \NN^+$. 
\end{proof}
As a matter of fact we will prove below that $U_D(\ast)$ is independent of $(\ast)$.

\section{Proof of the main theorems} 
\label{proofs}

We follow the proofs in \cite{bcl}.

\subsection{Proof of Theorem \ref{max}}
\begin{proof}
Let $H$ be a very ample Cartier divisor on $X$. By Theorem \ref{bi13} there exists $m_1 \in \NN^+$ such that $\B_+(D) =  \Bs |\langle km_1D \rangle - H|$ for all $k \in \NN^+$. It follows that $|\langle km_1D \rangle - H|$ is base-point free on $X - \B_+(D)$ for all $k \in \NN^+$, which implies that $\Phi_{\langle km_1D \rangle}$ is an isomorphism on $X - \B_+(D)$. Set $U_m = U_m(\ast)$ and $U_D = U_D(\ast)$ (cfr. Lemma \ref{maxopen}(ii)). By Lemmas \ref{b'stab}(ii) and \ref{maxopen}(ii) we get that  $X - \B_+(D) \subseteq U_D$.

Conversely, by Lemmas \ref{b'stab}(i) and \ref{maxopen}(ii), there is an $m_0 \in \NN^+$ such that $\B(D, (\ast)) = \Bs |\langle km_0D \rangle|$ and $U_D = U_{km_0}$ for every $k \in \NN^+$. Set $m=km_0$ and consider the commutative diagram
\begin{equation}
\label{diag}
\xymatrix{ \hskip .2cm X_m \ar_{\mu_m}[d] \ar^{f_m}[r] &  \hskip .1cm Y_m  \ar_{\nu_m}[d] \\ X \ar@{-->}_{\hskip -.4cm \Phi_{\langle mD \rangle}}[r] &  \hskip .1cm \Phi_{\langle mD \rangle}(X) }
\end{equation}
where $\mu_m$ is the normalized blow-up of $X$ along the base ideal of $|\langle mD \rangle|$, $\nu_m$ is the normalization of $\Phi_{\langle mD \rangle}(X)$, and $f_m:X_m\to Y_m$ is the induced birational morphism between normal projective varieties. By construction, we have a decomposition 
\[ \mu_m^*({\langle mD \rangle}) = f_m^*A_m+F_m \]
where $A_m$ is an ample line bundle on $Y_m$ and $F_m$ is an effective divisor with 
\[ \Supp(F_m) = \mu_m^{-1}\left(\B(D, (\ast))\right). \] 
Now $\Phi_{\langle mD \rangle}$ is an isomorphism on $U_m$, whence, by \eqref{diag}, $\nu_m\circ f_m$ is an isomorphism on $\mu_m^{-1}(U_m)$ since $\mu_m$ is an isomorphism over $X-  \B(D, (\ast))$, and it follows that 
\begin{equation}
\label{equ:um}
\mu_m^{-1}(U_m)\subseteq X_m - \left(\Exc(f_m)\cup\Supp(F_m)\right).
\end{equation}
Since $\mu_m(\Exc(\mu_m)) \subseteq \B(D, (\ast))\subseteq \B_+(D)$, by Lemma \ref{b'stab}(ii), using \cite[Prop.\ 2.3]{bbp} (which holds over any algebraically closed field) we get
\[ \B_+(\mu_m^{\ast}D) =  \mu_m^{-1}(\B_+(D)). \]
Now let $G_m =  \sum\limits_{i = 1}^s \{mt_i\} H_i$, so that $G_m$ is zero or ample and $mD \sim_{\RR} \langle mD \rangle + G_m$. Let 
\[ V_m =  \begin{cases} \emptyset & {\rm if} \ G_m = 0 \\ \B_+(\mu_m^*G_m) & {\rm if} \ G_m  \ \mbox{is ample} \end{cases}. \]
By \cite[Prop.\ 2.3]{bbp} we get that, if $G_m$ is ample, then $V_m \subseteq  \Exc(\mu_m) \subseteq \Supp(F_m)$. 
Therefore, by the numerical invariance of $\B_+(D)$ \cite[Prop.\ 1.4]{elmnp1}, we get 
\[ \B_+(\mu_m^*D) = \B_+(\mu_m^*(mD)) = \B_+(\mu_m^*(\langle mD \rangle) + \mu_m^*G_m) = \B_+(f_m^*A_m + F_m + \mu_m^*G_m) \subseteq \] \[ \subseteq \B_+(f_m^*A_m + F_m) \cup V_m \subseteq \B_+(f_m^*A_m) \cup \Supp(F_m). \]
Another application of \cite[Prop.\ 2.3]{bbp} gives
\[ \B_+(f_m^*A_m) \cup \Supp(F_m) \subseteq \Exc(f_m) \cup \Supp(F_m), \]
so that
\begin{equation}
\label{altra}
\mu_m^{-1}\left(\B_+(D)\right) \subseteq \Exc(f_m) \cup \Supp(F_m)
\end{equation}
and, thanks to (\ref{equ:um}), we conclude as desired that $U_m \subseteq X - \B_+(D)$.
\end{proof}

\subsection{Proof of Theorem \ref{b+}}
\label{c}
\begin{proof}
If $D$ is not big Theorem \ref{b+} follows by Proposition \ref{volume}(i). 

Now assume that $D$ is big. We use the notation in the previous subsection. 

Let $Z$ be an irreducible component of $\B_+(D)$, let $d = \dim Z$, so that necessarily $d > 0$ by \cite[Prop.\ 1.1]{elmnp2} (which relies on a result of \cite{z} valid for normal varieties over any algebraically closed field). If $Z \subseteq \B(D, (\ast))$ then  obviously $H^0(X|Z,\langle lD \rangle) = 0$ for every $l \in \NN^+$ and therefore $\vol_{X|Z}(D, (\ast)) = 0$. 

We may thus assume that $Z \not\subseteq \B(D, (\ast))$. The proof of Theorem \ref{max} gives, by \eqref{equ:um} and \eqref{altra}, that, for all $m=km_0$,
\[ \mu_m^{-1}\left(\B_+(D)\right) = \mu_m^{-1}(X - U_m) = \Exc(f_m) \cup \Supp(F_m) \]
so that the strict transform $Z_m$ of $Z$ on $X_m$ is an irreducible component of $\Exc(f_m)$. Since $f_m$ is a birational morphism between normal varieties, it follows that $\dim f_m(Z_m)< \dim Z$, whence, by \eqref{diag}, also that
\[ \dim\Phi_{\langle mD \rangle}(Z) = \dim (\nu_m\circ f_m)(Z_m) < d. \]
As in \cite[Proof of Cor. 2.5]{bcl} we have $\Phi_{\langle mD \rangle}(Z) = \Phi_{W_m}(Z)$, where $W_m = H^0(X|Z,  \langle mD \rangle)$. Hence, setting $\kappa= \kappa(\R(X|Z, D, (\ast)) := \trdeg(\R(X|Z, D, (\ast))-1$ we see, as in \cite[Lemma 2.3]{bcl} (here we use Remark \ref{homog1} - see \cite[Lemma 3.14]{b}), that $\kappa < d$. Now, as in \cite[Prop.\ 2.1]{bcl} (using also \cite[Lemma 4.1]{bir}), we get that there exists $C > 0$ such that $h^0(X|Z,  \langle lD \rangle) \leq C l^{\kappa}$  for every $l \in \NN^+$ and therefore $\vol_{X|Z}(D, (\ast)) = 0$.
 
It remains to prove that, if $Z \subseteq X$ is a subvariety of dimension $d > 0$ such that $Z \not \subseteq \B_+(D)$, then $\vol_{X|Z}(D, (\ast)) > 0$. Since $\B_+(D) = \Bs |\langle m_1D \rangle - H|$ there is $E \in |\langle m_1D \rangle - H|$ such that $Z \not \subseteq \Supp(E)$. Let $l_0 \in \NN^+$ be such that $H^1(X, {\mathfrak I}_Z(lH)) = 0$ for every $l \geq l_0$, so that the commutative diagram
\begin{equation*}
\xymatrix{ \hskip .2cm H^0(X, lH) \ar@{^{(}->}[d]  \ar@{->>}[r] & H^0(Z, lH_{|Z})  \ar@{^{(}->}[d]  \\ H^0(X, l \langle m_1D \rangle) \ar[r] & H^0(Z, l \langle m_1D \rangle_{|Z})  }
\end{equation*}
shows that $h^0(X|Z, l \langle m_1D \rangle) \geq h^0(Z, lH_{|Z}) \geq C l^d$ for some $C > 0$. By Remark \ref{formuletta}(iii) we have that $h^0(X|Z, \langle l m_1D \rangle) \geq h^0(X|Z, l \langle m_1D \rangle)$ and we conclude by Lemma \ref{homog}(ii).
\end{proof}

\end{document}